\newtheorem{theorem}{Theorem}[section]%
\newtheorem{lemma}[theorem]{Lemma}%
\newtheorem{conjecture}[theorem]{Conjecture}
\newcommand{\abs}[1]{\lvert#1\rvert}
\newenvironment{proof}[1][Proof]{\noindent\textit{#1: } }{\hfill\rule{1mm}{2mm}}
\makeatletter \@addtoreset{equation}{section} \makeatother
\begin{document}

  \title{On the minimal feedback arc set of $m$-free Digraphs
 \thanks{Supported by the Key Project of Chinese Ministry of Education (109140) and NNSF of China (No. 11071233).}
  }
 \author{Hao Liang\thanks{Corresponding author: lianghao@mail.ustc.edu.cn}\,\,
 \\
  {\small Department of Mathematics}\\
  {\small Southwestern University of Finance and Economics}\\
  {\small Chengdu {\rm 611130}, China}\\
 \\
  Jun-Ming Xu \\
  {\small School of Mathematical Sciences}\\
  {\small University of Science and Technology of China}\\
 {\small Wentsun Wu Key Laboratory of CAS}\\
  {\small Hefei {\rm 230026}, China}\\
  }
\date{}

\maketitle {\centerline{\bf\sc Abstract}}\vskip 8pt  For a simple
digraph $G$, let $\beta(G)$ be the size of the smallest subset
$X\subseteq E(G)$ such that $G-X$ has no directed cycles, and let
$\gamma(G)$ be the number of unordered pairs of nonadjacent vertices
in $G$. A digraph $G$ is called $m$-free if $G$ has no directed
cycles of length at most $m$. This paper proves that $\beta(G)\leq
\frac{1}{m-2}\gamma(G)$ for any $m$-free digraph $G$, which
generalized some known results.

\vskip6pt\noindent{\bf Keywords}: Digraph, Directed cycle

\noindent{\bf AMS Subject Classification: }\ 05C20, 05C38

\section{Introduction}

Let $G=(V,E)$ be a digraph without
loops and parallel edges, where $V=V(G)$ is the vertex-set and
$E=E(G)$ is the edge-set.

It is well known that the cycle rank of an undirected graph $G$ is
the minimum number of edges that must be removed in order to
eliminate all of the cycles in the graph. That is, if $G$ has
$\upsilon$ vertices, $\varepsilon$ edges, and $\omega$ connected
components, then the minimum number of edges whose deletion from $G$
leaves an acyclic graph equals the cycle rank (or Betti number)
$\rho(G)=\varepsilon-\upsilon+\omega$ (see Xu~\cite{x03}). However,
the same problem for a digraph is quite difficulty. In fact, the
Betti number for a digraph was proved to be NP-complete by Karp in
1972 (see the 8th of 21 problems in~\cite{k72}).

A digraph $G$ is called to be {\it $m$-free} if there is no directed
cycle of $G$ with length at most $m$. We say $G$ is {\it acyclic} if
it has no directed cycles. For a digraph $G$, let $\beta(G)$ be the
size of the smallest subset $X\subseteq E(G)$ such that $G-X$ is
acyclic, here $X$ is called a{\it  minimal feedback arc-set} of $G$.
Let $\gamma(G)$ be the number of unordered pairs of nonadjacent
vertices in $G$, called the {\it number of missing edges} of $G$.

Chudnovsky, Seymour, and Sullivan~\cite{css08} proved that
$\beta(G)\leq \gamma(G)$ if $G$ is a $3$-free digraph and gave the
following conjecture.
\begin{conjecture}\label{cjt1.1}
If $G$ is a $3$-free digraph, then $\beta(G)\leq
\frac{1}{2}\gamma(G)$.
\end{conjecture}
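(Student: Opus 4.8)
The plan is to pass to the linear-ordering reformulation of $\beta$. For any ordering $\sigma=(v_1,\dots,v_n)$ of $V(G)$, deleting the \emph{backward} arcs (those $v_j\to v_i$ with $i<j$) destroys every directed cycle, and conversely a minimum feedback arc-set is itself the backward-arc set of some ordering, so $\beta(G)=\min_\sigma b(\sigma)$, where $b(\sigma)$ counts the backward arcs of $\sigma$. Fix an ordering $\sigma$ attaining this minimum. Since $G$ is $3$-free it has no digons, so each of the $\binom{n}{2}$ vertex pairs is exactly one of: a forward arc, a backward arc, or a missing pair; the backward arcs number exactly $\beta(G)$ and the missing pairs number exactly $\gamma(G)$. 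In this language Conjecture~\ref{cjt1.1} becomes the assertion that, in an optimal ordering of a $3$-free digraph, the missing pairs outnumber the backward arcs by a factor of at least two, i.e. $\gamma(G)\ge 2\beta(G)$.

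First I would extract structure from $3$-freeness, which here forbids directed triangles. Take a backward arc $v_j\to v_i$ (so $i<j$) and any vertex $v_k$ with $i<k<j$. The triangle $v_i\to v_k\to v_j\to v_i$ is forbidden, so the arcs $v_i\to v_k$ and $v_k\to v_j$ cannot both be present; hence for each intermediate vertex at least one of the pairs $\{v_i,v_k\}$, $\{v_k,v_j\}$ is either missing or carries its arc in the reverse (backward) direction. The aim is to turn this supply into a charging scheme that assigns to each backward arc at least two distinct missing pairs while keeping the total charge on any single missing pair bounded by one, which on summation gives $\gamma(G)\ge 2\beta(G)$. To keep the assignment injective I would exploit the local optimality of $\sigma$: no shift of a single vertex to another position can decrease $b(\sigma)$, and these inequalities control, around each backward arc, the balance between forward and backward crossing arcs, which is precisely the information needed to discount the reversed (non-missing) slots.

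The hard part will be the constant itself. A crude version of this charging guarantees only one missing pair per backward arc, which recovers $\beta(G)\le\gamma(G)$ -- exactly the $m=3$ instance of the paper's bound $\beta(G)\le\frac{1}{m-2}\gamma(G)$ -- and gaining the second missing pair is where the real difficulty lies. The interference is threefold: the intermediate vertices of one backward arc may themselves be endpoints of other backward arcs, the constrained slots supplied by triangle-freeness may be reversed arcs rather than genuine missing pairs, and the constant $1/2$ is believed to be best possible, so any valid argument must be asymptotically tight and cannot tolerate slack anywhere. For this reason I expect a purely local charging to stall at ratio $1$, and the more promising route is a global one -- bounding $\min_\sigma b(\sigma)$ by the average of $b(\sigma)$ over a carefully designed distribution of orderings, or setting up a fractional relaxation and using duality -- engineered so that the triangle-free constraints saturate exactly at ratio two. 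Closing this gap is the crux, and the conjecture remains open in general.
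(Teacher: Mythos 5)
This statement is Conjecture~\ref{cjt1.1} (the Chudnovsky--Seymour--Sullivan conjecture), and the paper does not prove it: it is stated as an open problem, with the best bounds cited being $\beta(G)\le 0.88\gamma(G)$ (Dunkum--Hamburger--P\'or) and $\beta(G)\le 0.8616\gamma(G)$ (Chen et al.), and the paper's own Theorem~\ref{thm1.4} deliberately excludes the case $m=3$. Your submission is likewise not a proof, and to your credit you say so explicitly: the charging scheme is never actually constructed, the discounting of reversed arcs against genuinely missing pairs is left as the acknowledged crux, and you concede that the local argument stalls at ratio $1$ rather than $2$. So the concrete gap is simply that the factor of two is never obtained; everything after the reformulation is a plan rather than an argument.

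That said, the parts you do assert are sound and your assessment of the difficulty is accurate. The identity $\beta(G)=\min_\sigma b(\sigma)$ over linear orderings is correct (backward arcs of any ordering form a feedback arc set, and a topological ordering of $G-X$ for a minimum feedback arc set $X$ shows the minimum is attained); $3$-freeness does make every pair exactly one of forward, backward, or missing; and the observation that a backward arc $v_j\to v_i$ together with an intermediate $v_k$ forbids the simultaneous presence of $v_i\to v_k$ and $v_k\to v_j$ is the standard starting point. One caveat even for the weaker ratio: a backward arc whose endpoints are adjacent in the ordering has no intermediate vertices at all, so the purely local charging does not by itself recover $\beta(G)\le\gamma(G)$ either; that bound requires the global argument of Chudnovsky, Seymour and Sullivan~\cite{css08}. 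Your conclusion that the conjecture remains open is the correct one, and no referee should treat this proposal as resolving the statement.
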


Concerning this conjecture, Dunkum, Hamburger, and
P\'{o}r~\cite{dhp11} proved that $\beta(G)\leq 0.88\gamma(G)$. Very
recently, Chen et al.~\cite{ckls11} improved the result to
$\beta(G)\leq 0.8616\gamma(G)$. Conjecture~\ref{cjt1.1} is closely
related to the following special case of the conjecture proposed by
Caccetta and H\"{a}ggkvist~\cite{ch78}.

\begin{conjecture}\label{cjt1.2}
Any digraph on n vertices with minimum out-degree at least $n/3$
contains a directed triangle.
\end{conjecture}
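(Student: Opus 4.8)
The plan is to argue by contradiction: suppose $G$ is a digraph on $n$ vertices with $\delta^+(G)\ge n/3$ and no directed triangle, and derive a contradiction. The first step is to record the local consequence of triangle-freeness. For every arc $u\to w$ there can be no vertex $x$ with $w\to x\to u$, so $N^+(w)\cap N^-(u)=\emptyset$, and consequently $|N^+(w)|+|N^-(u)|\le n$ whenever $u\to w$. Summing such inequalities, together with $\sum_v d^+(v)=\sum_v d^-(v)=|E(G)|\ge n^2/3$, already constrains the in-degree distribution and is the raw material for the counting below.

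The second step is a global double count of directed paths of length two. Writing $P_2=\sum_v d^-(v)\,d^+(v)$ for their number, triangle-freeness forbids the reverse arc $x\to u$ closing any path $u\to w\to x$, so each such path removes one potential arc from the at most $\binom n2$ pairs of vertices. Combining this with a convexity (Cauchy--Schwarz) lower bound on $P_2$ driven by $\delta^+\ge n/3$, the aim is to force $|E(G)|$ below $n^2/3$, contradicting the out-degree hypothesis. A complementary device is a breadth-first expansion from a fixed vertex $v_0$: since every vertex in $N^+(v_0)$ sends at least $n/3$ arcs that must avoid $\{v_0\}\cup N^-(v_0)$, the successive neighbourhoods $N^+(v_0),N^{++}(v_0),\dots$ are forced to expand, and one tries to show they cannot all fit inside $V(G)$.

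An alternative route passes through Conjecture~\ref{cjt1.1}, to which the statement is closely related. In a $3$-free digraph the adjacent pairs are exactly the arcs, so $\gamma(G)=\binom n2-|E(G)|\le\binom n2-n^2/3\approx n^2/6$; assuming $\beta(G)\le\tfrac12\gamma(G)$ would then cap $\beta(G)$ at about $n^2/12$, and I would try to contradict this by exhibiting enough independent directed cycles (a lower bound on the minimum feedback arc set) forced by the uniformly large out-degree. This would reduce the triangle problem to the feedback-arc-set inequality that is the theme of the present paper.

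The main obstacle is that the threshold $n/3$ is exactly the extremal value: the circulant in which each vertex dominates the next $\lceil n/3\rceil-1$ vertices is $3$-free with $\delta^+=\lceil n/3\rceil-1$, so any argument must be tight and cannot afford the slack that the disjointness and convexity inequalities above introduce. Every known attack (including the semidefinite/flag-algebra method, which currently reaches only $\delta^+\ge 0.3465\,n$) loses precisely this sliver, and closing the gap to $n/3$ --- equivalently, settling Conjecture~\ref{cjt1.1} --- is exactly the point at which the problem remains open.
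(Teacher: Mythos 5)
There is a fundamental mismatch here: the statement you were asked to prove is Conjecture~\ref{cjt1.2}, the Caccetta--H\"aggkvist conjecture in its out-degree-$n/3$ form, which the paper does \emph{not} prove --- it is stated as an open conjecture and cited only as motivation, with the introduction recounting that the best known thresholds are $0.3819n$ (Caccetta--H\"aggkvist), $0.3797n$ (Bondy), $0.3542n$ (Shen) and $0.35312n$ (Hamburger, Haxell, Kostochka, using the Chudnovsky--Seymour--Sullivan bound $\beta(G)\le\gamma(G)$). Your submission, to its credit, is honest about this: its final paragraph concedes that every route sketched loses a constant factor and that closing the gap to $n/3$ ``remains open.'' But that means what you have written is a research plan, not a proof, and it must be assessed as containing a genuine gap --- indeed the whole theorem is the gap. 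The disjointness inequality $|N^+(w)|+|N^-(u)|\le n$ for arcs $u\to w$ and the double count of directed two-paths are exactly the raw material of the Caccetta--H\"aggkvist and Shen arguments, and they are known to terminate at constants strictly above $1/3$; asserting that combining them ``forces $|E(G)|$ below $n^2/3$'' is precisely the step no one can carry out, and the extremal circulant you cite shows any such slack-bearing inequality cannot succeed at the threshold.

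Two further points deserve correction even within the sketch. First, the reduction through Conjecture~\ref{cjt1.1} runs in the wrong direction: what is known (and what the paper describes) is that upper bounds on $\beta$ in terms of $\gamma$ \emph{improve the constant} $c$, not that they yield $c=1/3$; even the full strength $\beta(G)\le\frac{1}{2}\gamma(G)$ is not known to imply Conjecture~\ref{cjt1.2}, and your proposed contradiction --- a lower bound $\beta(G)\gtrsim n^2/12$ from ``enough independent directed cycles'' --- is unsupported by any counting you give, and is in fact impossible to force, since the near-extremal circulants satisfy both $\delta^+\approx n/3$ and (conjecturally) $\beta\le\frac{1}{2}\gamma$, so the two hypotheses are mutually consistent exactly where you need them to clash. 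Second, a technical slip: a triangle-free counterexample to Conjecture~\ref{cjt1.2} need not be $3$-free, since the hypothesis does not exclude digons; consequently your identity $\gamma(G)=\binom{n}{2}-|E(G)|$ and the estimate $\gamma(G)\le\binom{n}{2}-n^2/3$ fail when adjacent pairs carry two arcs, so even the setup of the third route needs a digon-elimination step that is itself nontrivial. In short: there is no proof to compare against because the paper proves only Theorem~\ref{thm1.4} (the $\frac{1}{m-2}\gamma(G)$ bound for $m$-free digraphs), and your proposal, while a fair survey of the known lines of attack, does not and cannot (by its own admission) establish the conjecture.
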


Short of proving the conjecture, one may seek as small a value of
$c$ as possible such that every digraph on $n$ vertices with minimum
out-degree at least $cn$ contains a triangle. This was the strategy
of Caccetta and H\"aggkvist~\cite{ch78}, who obtained the value
$c\le 0.3819$. Bondy~\cite{b97} showed that $c\le 0.3797$, and
Shen~\cite{s98} improved it to $c\le 0.3542$. By using a result of
Chudnovsky, Seymour and Sullivan~\cite{css08} related to
Conjecture~\ref{cjt1.1}, Hamburger, Haxell, and
Kostochka~\cite{hhk07} further improved this bound to $0.35312$.
Namely, any digraph on $n$ vertices with minimum out-degree at least
$0.35312n$ contains a directed triangle.

More generally, Sullivan~\cite{s08} proposed the following
conjecture, and gave an example showing that this would be best
possible if this conjecture is true. Conjecture~\ref{cjt1.1} is the
special case when $m=3$.
\begin{conjecture}\label{cjt1.3}
If $G$ is an $m$-free digraph with $m\geq 3$, then
$$\beta(G)\leq \frac{2}{(m+1)(m-2)}\gamma(G).$$
\end{conjecture}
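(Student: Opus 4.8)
\noindent\emph{Towards the conjecture.} The plan is to reduce $\beta(G)$ to backward arcs and then to pay for them with missing edges supplied by the $m$-free hypothesis. For any linear order $\sigma=(v_1,\dots,v_n)$ of $V(G)$, deleting the set $B_\sigma$ of arcs $(v_i,v_j)$ with $i>j$ leaves only forward arcs and hence an acyclic digraph, so $\beta(G)=\min_\sigma|B_\sigma|$; it suffices to exhibit one order (or average over a family of orders) for which $|B_\sigma|\le\frac{2}{(m+1)(m-2)}\gamma(G)$. The one structural input is that $G$ is $m$-free, so every directed cycle has length at least $m+1$.

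The engine of the argument is a clean local fact that I can prove directly. \emph{If $C$ is a shortest directed cycle of $G$, then $C$ is chordless: no arc joins two non-consecutive vertices of $C$ in either direction, since such a chord would close a strictly shorter directed cycle.} Consequently all non-consecutive pairs on $C$ are nonadjacent, and as $|C|\ge m+1$ this produces at least $\binom{m+1}{2}-(m+1)=\frac{(m+1)(m-2)}{2}$ missing edges among the vertices of $C$. This count is exactly the one realized by the conjectured extremal configuration, namely the blow-up of a directed $(m+1)$-cycle in which each class is a transitive tournament, whose only nonadjacent pairs lie between non-consecutive classes; there $\beta(G)$ equals the size of one blown-up arc class and $\gamma(G)$ equals $\frac{(m+1)(m-2)}{2}$ times that number, so the target inequality holds with equality.

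It remains to assemble these local quotas into the global bound $\gamma(G)\ge\frac{(m+1)(m-2)}{2}\,\beta(G)$, and this is where the genuine difficulty lies. Working from an optimal order, I would attach to each backward arc a short closing cycle and distribute one unit of charge over the forced non-edges of that cycle, aiming to bound the load on any single missing edge by $\frac{(m+1)(m-2)}{2}$; a convexity or Cauchy--Schwarz step would then have to pin the worst case to the balanced extremal blow-up. Two obstacles block a naive execution. First, the cycle that a backward arc closes need not be a shortest cycle, so the chordless engine does not apply to it verbatim and the clean quota of $\frac{(m+1)(m-2)}{2}$ non-edges is not guaranteed for longer closing cycles. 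Second, distinct closing cycles share vertices, so their forced non-edges overlap and a single missing edge may be charged by many arcs; controlling this overlap by elementary means loses precisely the factor $(m+1)/2$ separating the conjecture from the readily available bound $\beta(G)\le\frac{1}{m-2}\gamma(G)$. Since the extremal example shows that no purely local rearrangement can do better, any proof must be essentially tight and global, which is why the statement is still only a conjecture, open even for $m=3$, where the best recorded bounds $0.88\,\gamma(G)$ and $0.8616\,\gamma(G)$ fall short of the target $\tfrac12\gamma(G)$.
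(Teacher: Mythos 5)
You have not proved the statement, and neither has the paper: the statement is Conjecture~\ref{cjt1.3} (Sullivan's conjecture), which the paper records as open and does not prove --- it establishes only the weaker Theorem~\ref{thm1.4}, $\beta(G)\leq \frac{1}{m-2}\gamma(G)$, so there is no proof in the paper to compare against. Your proposal is candid about this, and its verifiable content is correct as far as it goes: the identity $\beta(G)=\min_\sigma\abs{B_\sigma}$ over linear orders, the chordlessness of a shortest directed cycle, the resulting quota of at least $\binom{m+1}{2}-(m+1)=\frac{(m+1)(m-2)}{2}$ missing pairs among the vertices of any shortest cycle, and the verification that the transitive-tournament blow-up of a directed $(m+1)$-cycle is $m$-free and attains the conjectured bound with equality (this is essentially Sullivan's extremal example). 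But the step that would constitute the proof --- a global charging scheme in which each backward arc of an optimal order distributes one unit over forced non-edges, with the load on every missing edge bounded by $\frac{(m+1)(m-2)}{2}$ --- is only a plan, and you yourself name the two reasons it does not execute: the cycle closed by a backward arc need not be shortest, so the chordless quota is unavailable for it, and distinct closing cycles overlap, so a missing edge is charged many times. Those are precisely the obstructions; nothing in your text circumvents them, so the proposal is a program with an admitted hole at its center, not a proof.

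For contrast, the paper's route to the weaker constant $\frac{1}{m-2}$ is an induction on $\abs{V(G)}$ rather than a charging argument: one counts shortest \emph{induced} directed paths to form the ratios $\alpha_k(v)=p_k(v)/s_k(v)$ and $\beta_k(v)=r'_k(v)/t_k(v)$, an averaging identity (Lemma~\ref{lem2.5}, via Lemmas~\ref{lem2.2} and~\ref{lem2.3}) forces $\min\{\alpha,\beta\}\leq\frac{1}{m-2}$, and one then cuts $V(G)$ between the distance levels $\bigcup_{i=1}^{k+1}N_i^{+}(v)$ (or $N_i^{-}(v)$) and the rest, deleting the $p_k(v)$ (resp.\ $r'_k(v)$) cut arcs and recursing on the two sides, paying with the $s_k(v)$ (resp.\ $t_k(v)$) missing edges across the cut. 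Note that this machinery loses exactly the factor you isolate: it secures only $m-2$ missing edges per deleted arc across the cut, not $\frac{(m+1)(m-2)}{2}$, which is why the paper's method stops at Theorem~\ref{thm1.4}. If your goal is the conjecture, the unexecuted charging step \emph{is} the open problem (even for $m=3$, where the best known constants are $0.88$ and $0.8616$ rather than $\frac12$); if the weaker bound would serve your purposes, the level-partition induction above is the workable route, and your order/backward-arc framework would need to be abandoned or fundamentally strengthened to beat it.
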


Sullivan proved partial results of Conjecture~\ref{cjt1.3}, and
showed that $\beta(G)\leq \frac{1}{m-2}\gamma(G)$ for $m=4,5$. In
this article, we prove the following theorem, which extends
Sullivan's result to more general $m$-free digraphs for $m\geq 4$.

\begin{theorem}\label{thm1.4}
If $G$ is an m-free digraph with $m\geq 4$, then $\beta(G)\leq
\frac{1}{m-2}\gamma(G)$.
\end{theorem}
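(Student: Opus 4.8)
The plan is to work with an \emph{optimal} linear order rather than directly with a feedback arc set, and to convert the $m$-free hypothesis into a supply of missing edges. First I would record two easy reductions. Since $G$ is $m$-free with $m\ge 4$, it has no digons, so every pair of vertices carries at most one arc; hence $\gamma(G)=\binom{n}{2}-\abs{E(G)}$, where $n=\abs{V(G)}$. Second, a set $X\subseteq E(G)$ is a feedback arc set exactly when $X$ is the set of \emph{backward} arcs of some linear order of $V(G)$ (topologically sort $G-X$); consequently $\beta(G)=\min_\sigma b(\sigma)$, where $b(\sigma)$ counts the arcs pointing backward in $\sigma$. Thus it suffices to produce one order $\sigma$ with $(m-2)\,b(\sigma)\le\gamma(G)$, and I would take $\sigma$ to attain $\beta(G)$ subject to a secondary extremal condition (for instance, minimizing the total span $\sum_{e}(\mathrm{pos}(u)-\mathrm{pos}(v))$ of its backward arcs) so that local exchange arguments are available.

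The engine of the argument is a structural fact about each backward arc. Let $X$ be the set of backward arcs of $\sigma$. By minimality of $X$ (no arc is redundant), for every $e=(u,v)\in X$ the digraph $(G-X)+e$ contains a directed cycle through $e$; equivalently there is a directed path from $v$ to $u$ all of whose arcs are forward, say $v=y_0\to y_1\to\cdots\to y_\ell=u$. Together with the arc $u\to v$ this path closes a directed cycle of length $\ell+1$, so $m$-freeness forces $\ell\ge m$. In particular the two ends of every backward arc are separated by at least $m-1$ intermediate vertices, all lying strictly between $v$ and $u$ in $\sigma$.

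Next I would extract candidate missing edges from this path. For each $i$ with $1\le i\le m-2$, an arc $y_i\to u$ would close the cycle $u\to y_0\to y_1\to\cdots\to y_i\to u$ of length $i+2\le m$, which is impossible; since $y_i$ precedes $u$ in $\sigma$, the pair $\{y_i,u\}$ is therefore either a \emph{missing} pair or else a backward arc $u\to y_i$ of $\sigma$. This associates to every backward arc $e$ a set of $m-2$ distinct vertex pairs, none equal to $e$, each of which is a missing edge or another backward arc; a symmetric argument near $v$ yields a second such family to keep in reserve.

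The remaining, and hardest, step is the bookkeeping that turns these local associations into the global inequality $\gamma(G)\ge(m-2)\beta(G)$. The difficulty is twofold: a candidate pair may be another backward arc rather than a missing edge, and the same missing edge may be produced by several backward arcs (backward arcs can, for example, share the tail $u$). I expect this to be the main obstacle, and I would attack it through the secondary extremal choice of $\sigma$. If some associated pair $\{y_i,u\}$ were a backward arc $u\to y_i$, I would argue that re-inserting $u$ immediately after $y_i$ (or an analogous local move) strictly decreases the total span without increasing $b(\sigma)$, contradicting extremality; this should force every associated pair to be a genuine missing edge. Disjointness across distinct backward arcs should then follow by taking the forward paths to be canonical (say ``leftmost''), so that they are nested or disjoint. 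Making this exchange-and-charging scheme precise, so that the $m-2$ missing edges charged to different backward arcs are genuinely distinct, is where the real work lies.
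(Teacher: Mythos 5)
Your two preliminary reductions are sound (for an $m$-free digraph with $m\ge 4$ there are indeed no digons, and $\beta(G)$ is the minimum number of backward arcs over linear orders), and the extraction, for each backward arc $(u,v)$, of a forward path $v=y_0\to y_1\to\cdots\to y_\ell=u$ with $\ell\ge m$ is correct, as is the observation that each pair $\{y_i,u\}$ with $1\le i\le m-2$ is either a missing pair or a backward arc $u\to y_i$. But the argument stops exactly where it would have to begin: you establish neither (i) that these candidate pairs can be forced to be missing edges rather than backward arcs, nor (ii) that the $m-2$ pairs charged to distinct backward arcs are pairwise distinct, and the inequality $\gamma(G)\ge(m-2)\beta(G)$ rests entirely on both. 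Neither step is routine. For (i), the proposed exchange of reinserting $u$ immediately after $y_i$ turns the forward arc $y_{\ell-1}\to u$ (and every other forward arc into $u$ from a vertex lying between $y_i$ and the old position of $u$) into a backward arc, so it is not at all clear that $b(\sigma)$ does not increase; note also that $u\to y_i$ only closes a cycle of length $\ell-i+1$, which $m$-freeness does not forbid, so the arc's existence is consistent and must be excluded by the extremality of $\sigma$ alone. For (ii), two backward arcs $(u,v)$ and $(u,v')$ with a common tail whose forward paths coalesce after one step (say $v\to w\to\cdots\to u$ and $v'\to w\to\cdots\to u$) charge the same pairs $\{w,u\},\{y_2,u\},\dots$; taking ``leftmost'' paths does not prevent paths with different starting vertices from merging immediately, and nestedness of paths does not imply disjointness of the charged pairs. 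Your own text concedes that this is ``where the real work lies,'' which is an accurate self-assessment: the proposal is a plausible strategy, not a proof.

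For contrast, the paper sidesteps local charging entirely. It inducts on $|V(G)|$ and cuts the digraph along a distance layer, $V_1=\bigcup_{i=1}^{k+1}N_i^+(v)$ versus $V_2=V(G)\setminus V_1$ (or the analogous in-neighbourhood version), so that the only arcs from $V_1$ to $V_2$ run from $N_{k+1}^+(v)$ to $N_{k+2}^+(v)$; their number $p_k(v)$ is then compared to a count $s_k(v)$ of missing pairs across the cut, built from shortest induced paths (Lemma 2.3). The existence of a vertex $v$ and a level $k$ with $p_k(v)\le\frac{1}{m-2}s_k(v)$ (or the dual inequality) is obtained by a global double counting over all $v$ and all $k$ (Lemmas 2.2 and 2.5), and the induction hypothesis handles $G_1$ and $G_2$. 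That averaging step is precisely the substitute for the injective charging your sketch would need; if you want to salvage your approach, you should expect to need a comparable global counting argument rather than a purely local exchange.
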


\section{Some Lemmas}

Let $G$ be a simple digraph. For two disjoint subsets $A,B\subseteq
V (G)$, let $E(A,B)$ denote the set of directed edges from $A$ to
$B$, that is, $E(A,B)=\{(a,b)|\ a\in A,\ b\in B\}$. Let
$\bar{E}(A,B)$ be the missing edges between $A$ and $B$. It follows
that\emph{}
 $$
 \abs {\bar{E}(A,B)}=\abs {\bar{E}(B,A)}=|A|\cdot |B|-|E(A,B)|-|E(B,A)|.
 $$

A directed $(v_0,v_k)$-path $P$ in $G$ is a sequence of distinct
vertices $(v_0,v_1,\cdots, v_{k-1},v_k)$, where $(v_i,v_{i+1})$ is a
directed edge for each $i=0, \cdots, k-1$, its length is $k$.
Clearly, the subsequence $(v_1,\cdots, v_{k-1})$ is a
$(v_1,v_{k-1})$-path, denoted by $P'$. We can denote
$P=(v_0,P',v_k)$. A directed path $P$ is said to be {\it induced} if
every edge in the subgraph induced by vertices of $P$ is contained
in $P$.

For $v\in V(G)$, let $N_{i}^+(v)$ be the set of vertices $u$ such
that the shortest directed $(v,u)$-path has length $i$. Similarly,
let $N_{i}^-(v)$ be the set of vertices whose shortest directed path
to $v$ has length $i$. An induced directed $(v_0,v_k)$-path is
called to be {\it shortest} if $v_k\in N_{k}^+(v_0)$. From
definition, we immediately have the following result.

\begin{lemma}\label{lem2.1}
If $(v_0,v_1,\cdots, v_{k-1},v_k)$ is a shortest induced directed
$(v_0,v_k)$-path, then for any $i$ and $j$ with $0\leq i< j\leq k$,
$$v_j\in N_{j-i}^+(v_i) \ \ \text{and} \ \ v_i\in N_{j-i}^-(v_j).$$
\end{lemma}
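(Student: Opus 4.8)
The plan is to observe first that the two desired conclusions are merely the same assertion read from opposite ends: both $v_j\in N_{j-i}^+(v_i)$ and $v_i\in N_{j-i}^-(v_j)$ say precisely that the shortest directed $(v_i,v_j)$-path has length $j-i$. Hence it suffices to establish this single length statement, from which both memberships follow by definition.

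Since $(v_i,v_{i+1},\cdots,v_j)$ is itself a directed path of length $j-i$, the shortest directed $(v_i,v_j)$-path has length \emph{at most} $j-i$; the real content is the matching lower bound. I would argue by contradiction: suppose the shortest directed $(v_i,v_j)$-path $Q$ had length $\ell<j-i$. Splicing $Q$ between the initial segment $(v_0,\cdots,v_i)$ and the terminal segment $(v_j,\cdots,v_k)$ of the given path produces a directed $(v_0,v_k)$-walk of length $i+\ell+(k-j)=k-(j-i)+\ell<k$.

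The only point requiring care is that this spliced object is \emph{a priori} a walk rather than a path, because $Q$ might revisit a vertex lying on $(v_0,\cdots,v_{i-1})$ or on $(v_{j+1},\cdots,v_k)$. This is routine: any directed $(v_0,v_k)$-walk contains a directed $(v_0,v_k)$-path of no greater length, obtained by excising closed subwalks. We thereby obtain a directed $(v_0,v_k)$-path of length strictly less than $k$, contradicting the hypothesis $v_k\in N_k^+(v_0)$ that the shortest such path has length exactly $k$. This forces $\ell\ge j-i$, which combined with the upper bound gives equality and completes the lemma. I expect no genuine obstacle here; notably the \emph{induced} hypothesis on the path is not needed for this argument, only that the path is shortest.
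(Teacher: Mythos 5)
Your argument is correct: the subpath gives the upper bound $j-i$ on the distance from $v_i$ to $v_j$, and the splicing/walk-shortening argument gives the matching lower bound, which is exactly what both set memberships assert. The paper offers no proof at all (it declares the lemma immediate from the definition), so your write-up is simply the standard verification of that claim; your observation that the \emph{induced} hypothesis is not needed here is also accurate.
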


Let $\mathscr{P}(G)$ be the set of shortest induced directed paths
of $G$, and $m$ be a positive integer with $m\geq 4$. Let $v\in
V(G)$ and $k$ be an integer with $1\leq k\leq m-3$. For any $P\in
\mathscr{P}(G)$ of length $k-1$ and $x,y,z\in V(G)$, set

$P_k(v)=\{(x,y,z)|\ (x,P,y,z)\in \mathscr{P}(G), x=v\}$ and
$p_k(v)=|P_k(v)|$,

$Q_k(v)=\{(x,y,z)|\ (x,P,y,z)\in \mathscr{P}(G), y=v\}$ and
$q_k(v)=|Q_k(v)|$,

$R_k(v)=\{(x,y,z)|\ (x,P,y,z)\in \mathscr{P}(G), z=v\}$ and
$r_k(v)=|R_k(v)|$.

$P'_k(v)=\{(x,y,z)|\ (x,y,P,z)\in \mathscr{P}(G), x=v\}$ and
$p'_k(v)=|P'_k(v)|$,

$Q'_k(v)=\{(x,y,z)|\ (x,y,P,z)\in \mathscr{P}(G), y=v\}$ and
$q'_k(v)=|Q'_k(v)|$,

$R'_k(v)=\{(x,y,z)|\ (x,y,P,z)\in \mathscr{P}(G), z=v\}$ and
$r'_k(v)=|R'_k(v)|$.

\begin{lemma}\label{lem2.2}
For any integer $k$ with $1\leq k\leq m-3$ and $P\in\mathscr{P}(G)$
of length $k-1$,
  \begin{equation}\label{e2.1}
  \sum\limits_{v\in V(G)} p_k(v)=\sum\limits_{v\in V(G)} q_k(v)=\sum\limits_{v\in V(G)} r_k(v),
  \end{equation}
and
  \begin{equation}\label{e2.2}
  \sum\limits_{v\in V(G)} p'_k(v)=\sum\limits_{v\in V(G)} q'_k(v)=\sum\limits_{v\in V(G)} r'_k(v).
 \end{equation}
\end{lemma}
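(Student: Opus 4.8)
The plan is to prove both identities by a single double-counting argument, observing that each of the three sums in \eqref{e2.1} is merely a re-indexing of one and the same finite set of triples. Fix $k$ and the path $P\in\mathscr{P}(G)$ of length $k-1$ as in the statement, and set
$$
S=\{(x,y,z)\in V(G)^3:\ (x,P,y,z)\in\mathscr{P}(G)\}.
$$
A triple $(x,y,z)$ lies in $S$ precisely when prepending $x$ to $P$ and then appending $y$ and $z$ yields a shortest induced directed path. Since the vertices of any directed path are pairwise distinct, each such extended path determines its three attachment vertices $x,y,z$ uniquely, so the correspondence $(x,P,y,z)\mapsto(x,y,z)$ is a bijection between these extended paths and the elements of $S$.

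First I would record that, for a fixed vertex $v$, the definitions give directly $p_k(v)=\abs{\{(x,y,z)\in S:\ x=v\}}$, $q_k(v)=\abs{\{(x,y,z)\in S:\ y=v\}}$, and $r_k(v)=\abs{\{(x,y,z)\in S:\ z=v\}}$. Each of the three conditions $x=v$, $y=v$, $z=v$ picks out, as $v$ ranges over $V(G)$, a partition of $S$: every triple in $S$ has exactly one first coordinate, exactly one second coordinate, and exactly one third coordinate in $V(G)$. Summing over all $v$ therefore counts each element of $S$ exactly once in all three cases, yielding
$$
\sum_{v\in V(G)}p_k(v)=\sum_{v\in V(G)}q_k(v)=\sum_{v\in V(G)}r_k(v)=\abs{S},
$$
which is exactly \eqref{e2.1}.

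Finally, \eqref{e2.2} follows verbatim upon replacing $S$ with $S'=\{(x,y,z)\in V(G)^3:\ (x,y,P,z)\in\mathscr{P}(G)\}$ and the functions $p_k,q_k,r_k$ with $p'_k,q'_k,r'_k$, since nothing in the argument used the particular way in which $x,y,z$ are attached to $P$. As the whole proof is just a re-indexing of a single finite set, there is no substantial obstacle; the only point requiring a moment's care is verifying that grouping $S$ by each of its three coordinates genuinely gives a partition, which is immediate from the distinctness of the vertices along a directed path.
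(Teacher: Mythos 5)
Your proof is correct and is essentially the paper's own argument: the paper likewise observes that all three sums count the same set of triples $(x,y,z)$ with $(x,P,y,z)\in\mathscr{P}(G)$, and handles \eqref{e2.2} by symmetry. You merely spell out the partition-by-coordinate step that the paper leaves implicit.
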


\begin{proof}
For each integer $k$ with $1\leq k\leq m-3$ and $P\in
\mathscr{P}(G)$ of length $k-1$,
 $$
 \sum_{v\in V(G)}p_k(v),\  \sum_{v\in V(G)} q_k(v),\ \sum_{v\in V(G)} r_k(v)
 $$
are all equal to the number of triples $(x,y,z)$ of distinct
vertices such that $(x,P,y,z)\in \mathscr{P}(G)$ for $P\in
\mathscr{P}(G)$. Thus (\ref{e2.1}) holds. The proof of (\ref{e2.2})
is similar.
\end{proof}

\begin{lemma}\label{lem2.3}
If $G$ is an $m$-free digraph, then for any $v\in V(G)$ and any
integer $k$ with $1\leq k\leq m-3$,
$$\left\{\begin{array}{l}
p_k(v)=\abs{E(N_{k+1}^+(v),N_{k+2}^+(v))},\\
q_k(v)\leq \abs{\bar{E}(N_{k+1}^-(v),N_{1}^+(v))},\\
r_k(v)\leq \abs{\bar{E}(N_{1}^-(v),N_{k+2}^-(v))},\\
p'_k(v)\leq \abs{\bar{E}(N_{1}^+(v),N_{k+2}^+(v))},\\
q'_k(v)\leq \abs{\bar{E}(N_{k+1}^+(v),N_{1}^-(v))},\\
r'_k(v)=\abs{E(N_{k+2}^-(v),N_{k+1}^-(v))}.
\end{array}\right.$$
\end{lemma}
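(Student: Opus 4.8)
The plan is to prove each of the six (in)equalities by constructing an explicit map between the family of triples it counts and the asserted set of ordinary edges (for $p_k$ and $r'_k$) or missing edges (for the four inequalities), and then verifying injectivity, together with surjectivity in the two equality cases. The two ingredients I would use repeatedly are Lemma~\ref{lem2.1}, which places each vertex of a shortest induced path in the correct distance layer $N_i^+$ or $N_i^-$, and the inducedness of such paths, which guarantees that two non-consecutive vertices of the path are nonadjacent in $G$.

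For the four inequalities I would argue uniformly. Consider $q_k(v)$: a triple it counts comes from a path $(x,v_0,\cdots,v_{k-1},v,z)\in\mathscr{P}(G)$ with $y=v$. Reading distances from $v$ via Lemma~\ref{lem2.1} gives $x\in N_{k+1}^-(v)$ and $z\in N_1^+(v)$, while inducedness applied to the two endpoints $x,z$ shows they are nonadjacent, so $(x,z)$ is a missing edge counted by $\bar{E}(N_{k+1}^-(v),N_1^+(v))$. Since $y=v$ is fixed, the map $(x,v,z)\mapsto(x,z)$ is injective, yielding $q_k(v)\le\abs{\bar{E}(N_{k+1}^-(v),N_1^+(v))}$. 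The bounds on $r_k$, $p'_k$, and $q'_k$ follow the same pattern: identify the distance layers of the two endpoints, use inducedness to see those endpoints are nonadjacent, and observe that the resulting assignment into the matching missing-edge set is injective; the $N^-$-type statements are just the $N^+$-type ones read in the edge-reversed digraph. I would also remark that $m$-freeness makes the two layers in each pair disjoint, since a common vertex would yield a directed cycle through $v$ of length at most $k+2\le m-1$, so the missing-edge counts are well defined.

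The two equalities are the core of the lemma and, I expect, the main obstacle. For $p_k$ the injective direction is as above: from $(v,P,y,z)\in\mathscr{P}(G)$ Lemma~\ref{lem2.1} gives an edge $(y,z)$ with $y\in N_{k+1}^+(v)$ and $z\in N_{k+2}^+(v)$, and the map is injective because $x=v$ is fixed. The substance is surjectivity. Given any edge $(y,z)$ with $y\in N_{k+1}^+(v)$ and $z\in N_{k+2}^+(v)$, I would take a shortest $(v,y)$-path $(v,v_0,\cdots,v_{k-1},y)$ of length $k+1$ and append $z$, and then show the resulting path lies in $\mathscr{P}(G)$ with middle segment $(v_0,\cdots,v_{k-1})\in\mathscr{P}(G)$, so that $(v,y,z)\in P_k(v)$. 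Shortestness is immediate from $z\in N_{k+2}^+(v)$; the delicate point is inducedness. A forward chord would shortcut the path and contradict $d(v,y)=k+1$ or $d(v,z)=k+2$, whereas a backward chord on this path, which has $k+3$ vertices, would create a directed cycle of length at most $k+3$. This is exactly where the hypothesis $k\le m-3$ is used: it forces $k+3\le m$, so no such cycle can exist in the $m$-free digraph $G$, and the path is genuinely induced. The equality for $r'_k$ is the edge-reversed version of the same argument. The care needed to rule out these chords, and the sharpness of the bound $k\le m-3$, is the step I would watch most closely.
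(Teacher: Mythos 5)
Your proposal is correct and follows essentially the same route as the paper: Lemma~\ref{lem2.1} places the triple's vertices in the right distance layers (giving $p_k(v)\leq\abs{E(N_{k+1}^+(v),N_{k+2}^+(v))}$ and the injections into the missing-edge sets), while surjectivity for the two equalities comes from building the path through the intermediate layers and checking it is shortest and induced, with $k\leq m-3$ ruling out the short cycle a chord would create. The only cosmetic difference is that you obtain the backward non-adjacency directly from the inducedness of the path, whereas the paper re-derives it via an explicit directed cycle of length at most $k+3\leq m$; both are valid.
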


\begin{proof} By definition, for each edge
$(u,w)\in E(N_{k+1}^+(v),N_{k+2}^+(v))$, there exists $v_i\in
N_{i}^+(v)$, for each $i=1,2,\cdots k$, such that
$(v,v_1,\cdots,v_{k-1},v_k,u,w)$ is a directed $(v,w)$-path of
length $k+2$. Since $G$ is $m$-free and $1\leq k\leq m-3$, it is
easy to see that $(v,v_1,\cdots, v_{k-1},v_k,u,w)$ is a shortest
induced directed path. It follows that $(v,u,w)\in P_k(v)$ and
 \begin{equation}\label{e2.3}
 p_k(v)\geq \abs{E(N_{k+1}^+(v),N_{k+2}^+(v))}.
 \end{equation}

On the other hand, for each $(v,u,w)\in P_k(v)$, from the definition
of $P_k(v)$ and Lemma~\ref{lem2.1}, $u\in N_{k+1}^+(v)$ and $w\in
N_{k+2}^+(v)$. Thus $(u,w)\in E(N_{k+1}^+(v),N_{k+2}^+(v))$. It
follows that
 \begin{equation}\label{e2.4}
 p_k(v)\leq \abs{E(N_{k+1}^+(v),N_{k+2}^+(v))}.
 \end{equation}
Combining (\ref{e2.3}) and (\ref{e2.4}), we have that
$p_k(v)=\abs{E(N_{k+1}^+(v),N_{k+2}^+(v))}$. The proof of
$r'_k(v)=\abs{E(N_{k+2}^-(v),N_{k+1}^-(v))}$ is similar.

For each $(u,v,w)\in Q_k(v)$, from the definition of $Q_k(v)$ and
Lemma~\ref{lem2.1}, we have $u\in N_{k+1}^-(v)$, $w\in N_{1}^+(v)$
and $uw\notin E(G)$. Since $G$ is $m$-free, we have $(w,u)\notin
E(G)$. If not, there exists a directed cycle $(v,w,u)\cdots v$ with
length $l=k+3\leq m$, a contradiction. So $(u,w)\in
\abs{\bar{E}(N_{k+1}^-(v),N_{1}^+(v))}$. Thus, $q_k(v)\leq
\abs{\bar{E}(N_{k+1}^-(v),N_{1}^+(v))}$. The proof of $q'_k(v)\leq
\abs{\bar{E}(N_{k+1}^+(v),N_{1}^-(v))}$ is similar.

For each $(u,w,v)\in R_k(v)$, from the definition of $R_k(v)$ and
Lemma~\ref{lem2.1}, we have $u\in N_{k+2}^-(v)$, $w\in N_{1}^-(v)$
and $(u,w)\notin E(G)$. Since $G$ is $m$-free, $(w,u)\notin E(G)$.
Otherwise, there exists a directed cycle $(w,u,\cdots, w)$ with
length $l=k+2\leq m-1$, a contradiction. Thus we have $(u,w)\in
\abs{\bar{E}(N_{1}^-(v),N_{k+2}^-(v))}$. It derives that $r_k(v)\leq
\abs{\bar{E}(N_{1}^-(v),N_{k+2}^-(v))}$. The proof of $p'_k(v)\leq
\abs{\bar{E}(N_{1}^+(v),N_{k+2}^+(v))}$ is similar.
\end{proof}

\vskip6pt

For any $v\in V(G)$ and any integer $k$ with $1\leq k\leq m-3$, set
 $$
 \alpha_k(v)=\frac{p_k(v)}{s_k(v)} \ \ \text{and} \ \ \beta_k(v)=\frac{r'_k(v)}{t_k(v)}.
 $$

Here
 \begin{equation}\label{e2.5}
 s_k(v)=\sum\limits_{i=k}^{m-3} p'_i(v)+\sum\limits_{i=1}^{k} q'_i(v)\
 \ \text{and} \ \
 t_k(v)=\sum\limits_{i=k}^{m-3} r_i(v)+\sum\limits_{i=1}^{k} q_i(v).
 \end{equation}

The result is obvious.
\begin{lemma}\label{lem2.4}
If $a_i\geq 0, b_i\geq 0$ for each $i=1, 2, \cdots, n$, and
$\sum\limits_{i=1}^n b_i>0$, then
 $$
 \min\limits_{1\leq i\leq n}\left\{\frac{a_i}{b_i}\right\}\leq
 \frac{\sum\limits_{i=1}^n a_i}{\sum\limits_{i=1}^n b_i}.
 $$
\end{lemma}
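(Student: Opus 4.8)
The plan is to prove this by the standard ``mediant'' argument: bound each numerator from below by the minimum ratio times its own denominator, then sum. Write $\mu=\min_{1\leq i\leq n}\{a_i/b_i\}$ for the quantity on the left. For this to be well-defined I read the hypotheses as ensuring each $b_i>0$, so that every ratio $a_i/b_i$ makes sense; the stated condition $\sum_{i=1}^n b_i>0$ then simply guarantees that the denominator on the right is positive.

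First I would record the defining property of the minimum: for every index $i$ we have $a_i/b_i\geq\mu$. Since $b_i\geq 0$, multiplying through by $b_i$ preserves the direction of the inequality and yields $a_i\geq\mu b_i$ for each $i$. Next I would sum these $n$ inequalities to obtain $\sum_{i=1}^n a_i\geq\mu\sum_{i=1}^n b_i$. Because $\sum_{i=1}^n b_i>0$, dividing both sides by this sum is legitimate and gives $\frac{\sum_{i=1}^n a_i}{\sum_{i=1}^n b_i}\geq\mu$, which is exactly the asserted inequality.

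The only point requiring any care — and it is minor — is the treatment of indices with $b_i=0$, where the ratio $a_i/b_i$ is undefined. One either assumes all $b_i>0$ (as above) or restricts the minimum to those indices with $b_i>0$; in the latter case the omitted terms contribute $0$ to $\sum b_i$ and a nonnegative amount to $\sum a_i$, so they only strengthen the summed inequality and the conclusion is unaffected. There is no genuine obstacle here: the lemma is an elementary averaging fact, which is why the authors flag it as obvious, and the entire argument reduces to the single step $a_i\geq\mu b_i$ followed by summation.
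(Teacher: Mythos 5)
Your argument is correct and is exactly the standard mediant/averaging argument the authors have in mind when they dismiss the lemma as obvious (the paper supplies no proof at all): from $a_i\geq\mu b_i$ for each $i$, summing and dividing by $\sum_i b_i>0$ gives the claim. Your remark on the indices with $b_i=0$ is a sensible clarification of the statement, but there is nothing further to compare against.
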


Let
  \begin{equation}\label{e2.6}
 \alpha=\min\limits_{v\in V(G) \atop 1\leq k\leq m-3}\{ \alpha_k(v)\}\ \ {\rm and}\ \
 \beta=\min\limits_{v\in V(G) \atop 1\leq k\leq m-3}\{ \beta_k(v)\}.
 \end{equation}
Applying Lemma~\ref{lem2.4}, we obtain the following bound about
$\alpha$ and $\beta$.

\begin{lemma}\label{lem2.5}
If $G$ is a $m$-free digraph, then
 $$
 \min\{ \alpha, \beta\}\leq \frac{1}{m-2}.
 $$

\end{lemma}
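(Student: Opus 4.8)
The plan is to bound both $\alpha$ and $\beta$ from above by mediants using Lemma~\ref{lem2.4}, evaluate those mediants explicitly via the counting identities of Lemma~\ref{lem2.2}, and then combine the two bounds multiplicatively so that a geometric-mean step removes all dependence on the particular graph. First I would apply Lemma~\ref{lem2.4} to the doubly indexed family $\{p_k(v)\}$ in the numerator and $\{s_k(v)\}$ in the denominator, ranging over all $v\in V(G)$ and $1\le k\le m-3$. Since $\alpha$ is by definition the minimum of the ratios $p_k(v)/s_k(v)$, this yields
$$\alpha\le\frac{\sum_{v,k}p_k(v)}{\sum_{v,k}s_k(v)},$$
and the identical argument applied to $\{r'_k(v)\}$ and $\{t_k(v)\}$ gives $\beta\le\bigl(\sum_{v,k}r'_k(v)\bigr)/\bigl(\sum_{v,k}t_k(v)\bigr)$.

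Next I would compute these four sums. Writing $S_k=\sum_v p_k(v)$ and $S'_k=\sum_v p'_k(v)$, Lemma~\ref{lem2.2} tells me that $\sum_v p_k(v)=\sum_v q_k(v)=\sum_v r_k(v)=S_k$ and $\sum_v p'_k(v)=\sum_v q'_k(v)=\sum_v r'_k(v)=S'_k$. Summing the defining expression $s_k(v)=\sum_{i=k}^{m-3}p'_i(v)+\sum_{i=1}^{k}q'_i(v)$ first over $v$ and then over $k$, and interchanging the order of summation, each index $i$ collects weight $i$ from the first double sum (where $1\le k\le i$) and weight $m-2-i$ from the second (where $i\le k\le m-3$), for a constant total weight of exactly $m-2$. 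Hence $\sum_{v,k}s_k(v)=(m-2)\sum_k S'_k$ and, symmetrically, $\sum_{v,k}t_k(v)=(m-2)\sum_k S_k$. Setting $A=\sum_k S_k$ and $B=\sum_k S'_k$, the two bounds become $\alpha\le A/((m-2)B)$ and $\beta\le B/((m-2)A)$.

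Finally I would multiply the two inequalities to obtain $\alpha\beta\le\frac{1}{(m-2)^2}$, and since $\alpha,\beta\ge 0$ we have $\min\{\alpha,\beta\}^2\le\alpha\beta\le(m-2)^{-2}$, which gives $\min\{\alpha,\beta\}\le\frac{1}{m-2}$ as claimed. The one genuinely delicate step is the double-counting that produces the clean factor $m-2$: it is precisely the telescoping of the two overlapping ranges $[k,m-3]$ and $[1,k]$ into a weight independent of $i$ that forces the bound, so I would verify the index shifts with care. I would also note the degenerate cases in which some denominator $\sum_{v,k}s_k(v)$ or $\sum_{v,k}t_k(v)$ vanishes, making $\alpha$ or $\beta$ vacuously defined; these must either be excluded by the standing assumption that the relevant minima exist or dealt with separately, since the mediant step of Lemma~\ref{lem2.4} requires a positive total denominator.
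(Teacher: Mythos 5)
Your proposal is correct and follows essentially the same route as the paper: both apply Lemma~\ref{lem2.4} to bound $\alpha$ and $\beta$ by the global ratios $\sum_{v,k}p_k(v)/\sum_{v,k}s_k(v)$ and $\sum_{v,k}r'_k(v)/\sum_{v,k}t_k(v)$, and both rest on the same double count (via Lemma~\ref{lem2.2}) showing each index $i$ receives total weight $i+(m-2-i)=m-2$, so that $\sum_{v,k}s_k(v)=(m-2)\sum_{v,k}r'_k(v)$ and $\sum_{v,k}t_k(v)=(m-2)\sum_{v,k}p_k(v)$. The only difference is the finish: the paper combines the two bounds additively with one more mediant step, obtaining $(A+B)/\bigl((m-2)(A+B)\bigr)=1/(m-2)$, whereas you multiply them and take a square root via $\min\{\alpha,\beta\}^2\le\alpha\beta\le(m-2)^{-2}$; both exploit the same cross-cancellation, and your remark about vanishing denominators applies equally to the paper's own argument.
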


\begin{proof}
By Lemma~\ref{lem2.4}, we have
 $$
 \alpha=\min\limits_{v\in V(G) \atop 1\leq k\leq m-3}\{ \alpha_k(v)\}
 =\min\limits_{v\in V(G) \atop 1\leq k\leq m-3}\left\{ \frac{p_k(v)}{s_k(v)}\right\}
 \leq \frac{\sum\limits_{k=1}^{m-3}\sum\limits_{v\in V(G)} p_k(v)}{\sum\limits_{k=1}^{m-3}\sum\limits_{v\in V(G)} s_k(v)},
 $$
and
 $$
 \beta=\min\limits_{v\in V(G) \atop 1\leq k\leq m-3}\{ \beta_k(v)\}
 =\min\limits_{v\in V(G) \atop 1\leq k\leq m-3}\left\{ \frac{r'_k(v)}{t_k(v)}\right\}
 \leq \frac{\sum\limits_{k=1}^{m-3}\sum\limits_{v\in V(G)} r'_k(v)}{\sum\limits_{k=1}^{m-3}\sum\limits_{v\in V(G)} t_k(v)}.
 $$
It follows that
 \begin{equation}\label{e2.7}
 \min\{ \alpha, \beta\}\leq  \frac{\sum\limits_{k=1}^{m-3}\left(\sum\limits_{v\in V(G)} p_k(v)
 +\sum\limits_{v\in V(G)} r'_k(v)\right)}{\sum\limits_{k=1}^{m-3}\left(\sum\limits_{v\in V(G)} s_k(v)
 +\sum\limits_{v\in V(G)} t_k(v)\right)}.
 \end{equation}

Summing $s_k(v)$ and $t_k(v)$ over all $v\in V(G)$ and noting
(\ref{e2.5}), we have
$$
 \begin{array}{ll}
   \sum\limits_{k=1}^{m-3}\sum\limits_{v\in V(G)} s_k(v)
   &=\sum\limits_{k=1}^{m-3}\left(\sum\limits_{i=k}^{m-3} \sum\limits_{v\in V(G)} p'_i(v)\right)
   +\sum\limits_{k=1}^{m-3}\left(\sum\limits_{i=1}^{k} \sum\limits_{v\in V(G)}q'_i(v)\right)\\
   &=\sum\limits_{k=1}^{m-3}\left(\sum\limits_{i=k}^{m-3} \sum\limits_{v\in V(G)} r'_i(v)\right)
   +\sum\limits_{k=1}^{m-3}\left(\sum\limits_{i=1}^{k} \sum\limits_{v\in V(G)}r'_i(v)\right)\\
   &=\sum\limits_{k=1}^{m-3} \left(\sum\limits_{i=1}^{m-3} \sum\limits_{v\in V(G)} r'_i(v)
   +\sum\limits_{v\in V(G)} r'_k(v)\right)\\
   &=(m-2)\sum\limits_{k=1}^{m-3} \sum\limits_{v\in V(G)} r'_k(v)
\end{array}
 $$

and
$$
 \begin{array}{ll}
   \sum\limits_{k=1}^{m-3}\sum\limits_{v\in V(G)} t_k(v)
   &=\sum\limits_{k=1}^{m-3}\left(\sum\limits_{i=k}^{m-3} \sum\limits_{v\in V(G)} r_i(v)\right)
   +\sum\limits_{k=1}^{m-3}\left(\sum\limits_{i=1}^{k} \sum\limits_{v\in V(G)}q_i(v)\right)\\
   &=\sum\limits_{k=1}^{m-3} \left(\sum\limits_{i=k}^{m-3} \sum\limits_{v\in V(G)} p_i(v)\right)
   +\sum\limits_{k=1}^{m-3}\left(\sum\limits_{i=1}^{k} \sum\limits_{v\in V(G)}p_i(v)\right)\\
   &=\sum\limits_{k=1}^{m-3}\left(\sum\limits_{i=1}^{m-3} \sum\limits_{v\in V(G)} p_i(v)
   +\sum\limits_{v\in V(G)} p_k(v)\right)\\
   &=(m-2)\sum\limits_{k=1}^{m-3} \sum\limits_{v\in V(G)} p_k(v)
\end{array}
 $$
It follows that
 $$
 \sum\limits_{k=1}^{m-3}\left(\sum\limits_{v\in V(G)} s_k(v)+\sum\limits_{v\in V(G)} t_k(v)\right)
 =(m-2)\sum\limits_{k=1}^{m-3}\left(\sum\limits_{v\in V(G)} p_k(v)+\sum\limits_{v\in V(G)} r'_k(v)\right).
 $$
Substituting this equality into (\ref{e2.7}) yields
 $$
 \min\{ \alpha, \beta\}\leq \frac{1}{m-2}.
 $$

The lemma follows.
\end{proof}

\section{Proof of Theorem~\ref{thm1.4}}

Clearly Theorem~\ref{thm1.4} holds for $\abs {V(G)}\leq m$. We
proceeds the proof by induction on $\abs {V(G)}$ under the
assumption that Theorem~\ref{thm1.4} holds for all digraphs with
$\abs {V(G)}< n$, here $n> m$. Now let $G$ be an $m$-free digraph
with $\abs {V(G)}=n$, we may assume that for any $v\in V(G)$,
$N_{1}^+(v)\neq \emptyset$ and $N_{1}^-(v)\neq \emptyset$.
Otherwise, if there exists $v\in V(G)$ such that
$N_{1}^+(v)=\emptyset$ or $N_{1}^-(v)=\emptyset$, then $v$ is not in
a directed cycle. From the inductive hypothesis, we can choose
$X\subseteq E(G-v)$ with $\abs {X}\leq \frac{1}{m-2}\gamma(G-v)$
such that $(G-v)-X$ is acyclic, then $G-X$ has no directed cycles.
It follows that
 $$
 \beta(G)\leq \abs {X}\leq \frac{1}{m-2}\gamma(G-v)\leq \frac{1}{m-2}\gamma(G).
 $$

From Lemma~\ref{lem2.5}, we have that $\alpha\leq \frac{1}{m-2}$ or
$\beta\leq \frac{1}{m-2}$. For each case, we prove that there exists
$X\subseteq E(G)$ satisfying $\abs X\leq \frac{1}{m-2}\gamma (G)$
and $G -X$ has no directed cycles.

\noindent{\bf Case 1.}  $\alpha\leq \frac{1}{m-2}$.

By (\ref{e2.6}), there exist a vertex $v\in V(G)$ and an integer $k$
with $1\leq k\leq m-3$ such that
 $$
 \alpha=\alpha_k(v)=\frac{p_k(v)}{s_k(v)}\leq \frac{1}{m-2}.
 $$
We consider the partition $\{V_1,V_2\}$ of $V(G)$, where
 $$
 V_1=\bigcup\limits_{i=1}^{k+1} N_{i}^+(v),\ \  V_2=V(G)\setminus V_1.
 $$
Clearly, $N_{1}^-(v)\subset V_2$ and $\bigcup_{i=k+2}^{m-1}
N_{i}^+(v)\subset V_2$. Since $G$ is an $m$-free digraph, we claim
 $$
 N_{1}^-(v)\cap \bigcup_{i=1}^{m-1} N_{i}^+(v)=\emptyset.
 $$
Otherwise, let $u\in N_{1}^-(v)\cap \bigcup_{i=1}^{m-1} N_{i}^+(v)$.
Then $(u,v)\in E(G)$ and there exists a directed $(v,u)$-path $P$
with length $l_1\leq m-1$. Then $P+(u,v)$ is a directed cycle with
length $l_1+1\leq m$, a contradiction.

Thus the number of missing edges between $V_1$ and $V_2$ satisfies
$$
\begin{array}{ll}
 \abs {\bar{E}(V_1,V_2)}
 &\geq \abs {\bar{E}(\ \bigcup_{i=1}^{k+1} N_{i}^+(v),N_{1}^-(v)\cup \left(\bigcup_{i=k+2}^{m-1} N_{i}^+(v)\right))}\\
 &\geq \sum\limits_{i=k+2}^{m-1} \abs{\bar{E}(N_{1}^+(v),N_{i}^+(v))}+\sum\limits_{i=2}^{k+1} \abs{\bar{E}(N_{i}^+(v),N_{1}^-(v))}\\
 &\geq \sum\limits_{i=k}^{m-3} p'_i(v)+\sum\limits_{i=1}^{k} q'_i(v)\\
 &=s_k(v).
 \end{array}
 $$
It follows that
 \begin{equation}\label{e3.1}
 \gamma (G)=\gamma(G_1)+\gamma(G_2)+\abs {\bar{E}(V_1,V_2)}\geq \gamma(G_1)+\gamma(G_2)+s_k(v).
 \end{equation}

Let $G_i$ be the induced subgraph by $V_i$ for each $i=1,2$. Since
$\abs {V_1}< n$ and $\abs {V_2}< n$, from the inductive hypothesis,
we have $\beta(G_1)\leq \frac{1}{m-2}\gamma(G_1)$ and
$\beta(G_2)\leq \frac{1}{m-2}\gamma(G_2)$. We choose $X_i\subseteq
E(G_i)$ with
 \begin{equation}\label{e3.2}
 \abs {X_i}\leq \frac{1}{m-2}\gamma(G_i)\ \ {\rm for\ each}\ i=1,2
 \end{equation}
such that $G_i-X_i$ is acyclic.

Let $X_3=E(V_1,V_2)$. Then
$X_3=E(N_{k+1}^+(v),V_2)=E(N_{k+1}^+(v),N_{k+2}^+(v))$, and
 \begin{equation}\label{e3.3}
 \abs {X_3}=|E(N_{k+1}^+(v),N_{k+2}^+(v))|=p_k(v).
  \end{equation}
Let $X=X_1\cup X_2\cup X_3$. Then $G-X$ has no directed cycles and,
by (\ref{e3.1}) $\sim$ (\ref{e3.3}),
 $$
 \begin{array}{ll}
 \abs {X}&= \abs {X_1}+\abs {X_2}+\abs {X_3}\\
        &=\abs {X_1}+\abs {X_2}+p_k(v)\\
        &\leq \frac{1}{m-2}\gamma(G_1)+\frac{1}{m-2}\gamma(G_2)+\frac{1}{m-2}s_k(v)\\
        &= \frac{1}{m-2}\left(\gamma(G_1)+\gamma(G_2)+s_k(v)\right)\\
        &\leq \frac{1}{m-2}\gamma (G).\\
\end{array}$$

\noindent{\bf Case 2.} $\beta\leq \frac{1}{m-2}$.

By (\ref{e2.6}), there exist a vertex $v\in V(G)$ and an integer $k$
with $1\leq k\leq m-3$ such that
 $$
 \beta=\beta_k(v)=\frac{r'_k(v)}{t_k(v)}\leq \frac{1}{m-2}.
 $$
We consider the partition $\{V_1,V_2\}$ of $V(G)$, where
 $$
 V_1=\bigcup\limits_{i=1}^{k+1} N_{i}^-(v),\ \  V_2=V(G)\setminus V_1.
 $$
Clearly, $N_{1}^+(v)\subset V_2$, $\bigcup_{i=k+2}^{m-1}
N_{i}^-(v)\subset V_2$ and $N_{1}^+(v)\cap \bigcup_{i=k+2}^{m-1}
N_{i}^-(v)=\emptyset$. The number of missing edges between $V_1$ and
$V_2$ satisfies
$$
\begin{array}{ll}
 \abs {\bar{E}(V_1,V_2)}
 &\geq \abs {\bar{E}(\ \bigcup_{i=1}^{k+1} N_{i}^-(v),N_{1}^+(v)\cup \left(\bigcup_{i=k+2}^{m-1} N_{i}^-(v)\right))}\\
 &\geq \sum\limits_{i=k+2}^{m-1} \abs{\bar{E}(N_{1}^-(v),N_{i}^-(v))}+\sum\limits_{i=2}^{k+1} \abs{\bar{E}(N_{i}^-(v),N_{1}^+(v))}\\
 &\geq \sum\limits_{i=k}^{m-3} r_i(v)+\sum\limits_{i=1}^{k} q_i(v)\\
 &=t_k(v).
 \end{array}
 $$

Then
 $$
 \gamma (G)=\gamma(G_1)+\gamma(G_2)+\abs {\bar{E}(V_1,V_2)}\geq \gamma(G_1)+\gamma(G_2)+t_k(v).
 $$

Let $G_i$ be the induced subgraph by $V_i$ for each $i=1,2$. For
$i=1,2$, from the inductive hypothesis, $\beta(G_1)\leq
\frac{1}{m-2}\gamma(G_1)$ and $\beta(G_2)\leq
\frac{1}{m-2}\gamma(G_2)$, we can choose $X_i\subseteq E(G_i)$ with
$\abs {X_i}\leq \frac{1}{m-2}\gamma(G_i)$ such that $G_i-X_i$ is
acyclic. Let $X_3=(V_2,V_1)$, we have
$X_3=E(V_2,N_{k+1}^-(v))=E(N_{k+2}^-(v),N_{k+1}^-(v))$, and $\abs
{X_3}=r'_k(v)$. Let $X=X_1\cup X_2\cup X_3$. Then $G-X$ has no
directed cycles. Hence
 $$
 \begin{array}{ll}
 \abs {X}&= \abs {X_1}+\abs {X_2}+\abs {X_3}\\
        &=\abs {X_1}+\abs {X_2}+r'_k(v)\\
        &\leq \frac{1}{m-2}\gamma(G_1)+\frac{1}{m-2}\gamma(G_2)+\frac{1}{m-2}t_k(v)\\
         &= \frac{1}{m-2}\left(\gamma(G_1)+\gamma(G_2)+t_k(v)\right)\\
        &\leq \frac{1}{m-2}\gamma (G).\\
\end{array}$$

For each case, there exists $X\subseteq E(G)$ satisfying $\abs X\leq
\frac{1}{m-2}\gamma (G)$ and $G-X$ has no directed cycles. This
implies that $\beta(G)\leq \abs X\leq \frac{1}{m-2}\gamma (G)$, and
Theorem~\ref{thm1.4} follows.

\end{document}